\newtheorem{theorem}{Theorem}[section]
\newtheorem{lemma}[theorem]{Lemma}
\newtheorem{corollary}[theorem]{Corollary}
\theoremstyle{definition}           
\newtheorem{remark}[theorem]{Remark}
\title{The converse of the Cowling--Obrechkoff--Thron theorem}
\author{Devon N.~Munger \and Pietro Paparella}
\begin{document}
\maketitle

\begin{abstract}
In this work, the converse of the Cowling--Obrechkoff--Thron theorem is established. In addition to its theoretical interest, the result fills a gap in the proof of Kellogg's celebrated eigenvalue inequality for matrices whose principal minors are positive or nonnegative. \\

\noindent Keywords: eigenvalue inequality, location of zeros, $P$ matrix, $P_0$ matrix \\

\noindent MSC2020: Primary: 30C15, 26C10, Secondary: 15A42
\end{abstract}

\section{Introduction}

An $n$-by-$n$ matrix $A$ with complex entries is called a \emph{$P$ matrix} (respectively, \emph{$P_0$ matrix}) if each of its principal minors is positive (respectively, nonnegative). This class of matrices was introduced by Fiedler and Pt\'{a}k \cite{fp1966} as a common generalization of the class of \emph{$M$ matrices} and the class of \emph{positive definite matrices}.  

In 1972, Kellogg \cite[Corollary 1]{k1972} offered the following result. 

\begin{theorem}
[Kellogg]
    \thlabel{kell}
        If $\lambda = r(\cos\theta + i \sin\theta)\in \mathbb{C}$, with $\theta \in (0,2\pi]$, then $\lambda$ is an eigenvalue of a $P$ matrix if and only if 
        \begin{equation}
            \label{eigineq}
                \vert \theta - \pi \vert > \frac{\pi}{n}.     
        \end{equation}
        If $\lambda \ne 0$, then $\lambda$ is an eigenvalue of an $n$-by-$n$ $P_0$ matrix if and only if 
        \begin{equation}
            \label{eigineq2}
                \vert \theta - \pi \vert \ge \frac{\pi}{n}.     
        \end{equation}
\end{theorem}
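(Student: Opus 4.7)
The plan is to convert the eigenvalue condition into a zero-localization problem for a polynomial with sign-constrained coefficients, then apply the Cowling--Obrechkoff--Thron theorem (hereafter COT) for the necessary direction and its converse --- the main result of this paper --- for the sufficient direction.

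\emph{Necessity.} Let $A \in \mathbb{C}^{n \times n}$ have eigenvalues $\lambda_1, \dots, \lambda_n$. Its characteristic polynomial factors as
\[
\det(zI - A) = \sum_{k=0}^n (-1)^k E_k(A)\, z^{n-k},
\]
where $E_k(A)$ denotes the sum of the $k \times k$ principal minors of $A$ and $E_0(A) := 1$. The substitution $z = -w$ produces the polynomial
\[
q(w) := (-1)^n \det(-wI - A) = \sum_{k=0}^n E_k(A)\, w^{n-k},
\]
of degree exactly $n$ whose roots are $-\lambda_1, \dots, -\lambda_n$. If $A$ is a $P$ (resp.\ $P_0$) matrix then every coefficient of $q$ is positive (resp.\ nonnegative), and COT then forces each nonzero root $w$ of $q$ to satisfy $|\arg w| > \pi/n$ (resp.\ $|\arg w| \ge \pi/n$). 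Writing $\lambda = r e^{i\theta}$ with $\theta \in (0, 2\pi]$ gives $|\arg(-\lambda)| = |\theta - \pi|$, which yields \eqref{eigineq} and \eqref{eigineq2}.

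\emph{Sufficiency.} Conversely, suppose $\lambda = r e^{i\theta}$ satisfies $|\theta - \pi| > \pi/n$. Applying the converse of COT to the point $-\lambda$ produces a monic polynomial $p(w) = w^n + a_1 w^{n-1} + \cdots + a_n$ of degree $n$ with $a_k > 0$ and $p(-\lambda) = 0$. It then suffices to realize the $n$-tuple of roots of $p$ as the spectrum of some $n \times n$ $P$ matrix; this is a standard inverse-eigenvalue problem whose solution follows from the positivity of the elementary symmetric functions of these roots (equivalently, the positivity of the coefficients of $p$), with the matrix obtained for instance by a suitable modification of a companion-type form whose individual principal minors can be rendered positive by a positive diagonal similarity. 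The $P_0$ case proceeds analogously with nonnegative coefficients.

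\emph{Main obstacle.} The substantive difficulty is the converse of COT itself: starting from an arbitrary prescribed point of the permitted sector, one must construct a polynomial of degree exactly $n$ with positive coefficients vanishing at that point --- a constructive existence statement rather than a prohibition. A plausible line of attack is to reduce by rotation and scaling to a one-parameter family of canonical configurations and to exhibit an explicit extremal polynomial, for instance a product of binomial factors of the form $(w^j + c)$ whose zeros lie on the boundary rays $|\arg w| = \pi/n$, with the general interior case then following by a perturbation and continuity argument. The boundary configuration $|\theta - \pi| = \pi/n$ for the $P_0$ case should follow either from a boundary version of the converse of COT or from a compactness argument applied to a sequence of interior realizations.
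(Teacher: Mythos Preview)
Your overall architecture matches the paper's: translate eigenvalues of $A$ to zeros of $q_A(t)=(-1)^n p_A(-t)$, invoke COT for necessity, and for sufficiency combine the converse of COT with a matrix-realization result (what the paper quotes as \thref{kelltheorem4}). The necessity paragraph is correct.

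There is a genuine error in your sufficiency paragraph. You propose to realize the prescribed root set as the spectrum of a $P$ matrix via ``a companion-type form whose individual principal minors can be rendered positive by a positive diagonal similarity.'' But diagonal similarity preserves \emph{every} principal minor: if $D$ is diagonal and $S\subseteq\{1,\dots,n\}$, then $(D^{-1}AD)[S]=D[S]^{-1}A[S]\,D[S]$, so $\det\bigl((D^{-1}AD)[S]\bigr)=\det A[S]$. Since the companion matrix of $t^n+a_1t^{n-1}+\cdots+a_n$ has zero diagonal entries (hence zero $1\times 1$ principal minors), no diagonal similarity can turn it into a $P$ matrix. The paper does not attempt this step; it simply cites Kellogg's \thref{kelltheorem4}, whose proof in \cite{k1972} uses a different construction. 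You should either cite that result or supply a correct realization.

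On the ``main obstacle,'' your sketch of the converse of COT is both vaguer than and different from the paper's argument. You suggest products of binomials $(w^j+c)$ sitting on the boundary rays, followed by a perturbation/compactness argument for the interior and the $P_0$ boundary case. The paper instead produces, for $\alpha\in\bigl[\tfrac{\pi}{k},\tfrac{\pi}{k-1}\bigr)$ and each $1\le j<k$, an explicit trinomial
\[
Q_j(t)=t^{k}-\frac{\sin k\alpha}{\sin j\alpha}\,r^{k-j}t^{j}+\frac{\sin(k-j)\alpha}{\sin j\alpha}\,r^{k}
\]
with nonnegative coefficients and $Q_j(\mu)=0$; averaging $Q_1,\dots,Q_{k-1}$ yields strictly positive coefficients when $\alpha\in\bigl(\tfrac{\pi}{k},\tfrac{\pi}{k-1}\bigr)$, and multiplying by $1+t+\cdots+t^{n-k}$ raises the degree to $n$. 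No perturbation or limiting argument is needed. Your binomial-product idea might be salvageable, but as written it is only a heuristic, not a proof.
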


However, as will be explained in the sequel, Kellogg's proof is incomplete (see Section \ref{sec:background}) and requires the converse of the following result. 

\begin{theorem}
[Cowling--Obrechkoff--Thron]
    \thlabel{cto}
        Let $\mu = r(\cos\alpha + i\sin\alpha) \in \mathbb{C}$, with $\alpha \in (-\pi,\pi]$. If $q$ is a polynomial of degree $n$ with nonnegative coefficients such that $q(0) \ne 0$ and $q(\mu)=0$, then $\vert \alpha \vert > {\pi}/{n}$, unless $q$ is of the form $q(t) = a_n t^n + a_0$, in which case it has a zero satisfying $\vert \alpha \vert = {\pi}/{n}$ (if $n>1$, then $q(t) = a_n t^n + a_0$ has a conjugate pair of zeros satisfying $\vert \alpha \vert = {\pi}/{n}$).
\end{theorem}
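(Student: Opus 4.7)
The plan is to prove \thref{cto} by extracting the imaginary part of $q(\mu)=0$. Since $q$ has real coefficients, we may assume $\alpha\in[0,\pi]$ by replacing $\mu$ with $\bar\mu$ if necessary. The endpoints are immediate: at $\alpha=0$ one has $q(r)\ge a_0=q(0)>0$, so $\mu=r$ cannot be a root, and at $\alpha=\pi$ the bound $|\alpha|>\pi/n$ holds for $n\ge 2$, while for $n=1$ the polynomial is automatically of the form $a_1t+a_0$, placing us in the binomial exception.

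For $\alpha\in(0,\pi)$, expand $q(\mu)=\sum_{k=0}^n a_k r^k e^{ik\alpha}=0$ and take imaginary parts:
\[
\sum_{k=0}^{n} a_k r^k \sin(k\alpha)\;=\;0.
\]
The key observation is that when $\alpha\le\pi/n$, every angle $k\alpha$ for $k=0,1,\dots,n$ lies in $[0,\pi]$, so each $\sin(k\alpha)$ is nonnegative. If $\alpha<\pi/n$, then $n\alpha\in(0,\pi)$ yields $\sin(n\alpha)>0$; combined with $a_n>0$ and $r>0$, the $k=n$ term is strictly positive while all remaining terms are nonnegative, contradicting the vanishing sum. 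Hence $\alpha\ge\pi/n$.

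In the equality case $\alpha=\pi/n$, the only sines that vanish occur at $k=0$ and $k=n$; the nonnegative summands $a_k r^k\sin(k\pi/n)$ for $k=1,\dots,n-1$ each have strictly positive sine factor, so their total being zero forces $a_k=0$ throughout that range. Therefore $q(t)=a_n t^n+a_0$, and equating the real part of $q(\mu)=0$ yields $a_0=a_n r^n$, confirming that this binomial does vanish at $\mu$. The conjugate pair statement for $n>1$ is automatic, since the zeros of $a_n t^n+a_0$ form the orbit of $re^{i\pi/n}$ under multiplication by the $n$-th roots of unity, an orbit stable under complex conjugation.

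The argument presents no significant obstacle; the only subtleties are recognizing that $a_0=q(0)\ne 0$ and $a_n\ne 0$ (from $\deg q=n$) are precisely the ingredients that make the extreme terms of the imaginary sum reliable, and that a vanishing sum of nonnegative reals forces each summand to be zero, which is what pins the equality case to the binomial shape.
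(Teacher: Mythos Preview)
Your proof is correct. The paper itself does not prove \thref{cto}; it is stated as a known result attributed to Cowling--Thron (1954) and Obrechkoff (1923), so there is no in-paper argument to compare against. Your approach---extracting the imaginary part of $q(\mu)=0$ and observing that $\sin(k\alpha)\ge 0$ for all $0\le k\le n$ whenever $0<\alpha\le\pi/n$, with strict positivity of the $k=n$ term when $\alpha<\pi/n$---is the standard elementary argument and handles both the strict inequality and the characterisation of the equality case cleanly. The endpoint checks at $\alpha=0$ and $\alpha=\pi$ and the reduction to $\alpha\in[0,\pi]$ via conjugation are all sound.
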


\begin{remark}
\thref{cto} was established by Cowling and Thron in 1954 \cite[Theorem 4.1]{ct1954} and is a consequence of a more general theorem established by Obrechkoff in 1923 \cite{o1923}.    
\end{remark}

If $\lambda = r(\cos\theta + i\sin\theta)$ is a nonzero eigenvalue of a $P$ or $P_0$ matrix, with $\theta \in (0,2\pi]$, then it can be shown that $-\lambda = r(\cos(\theta-\pi) + i\sin(\theta-\pi))$ is a zero of a polynomial with positive or nonnegative coefficients, respectively (see Section \ref{sec:background}). By \thref{cto}, $\vert \theta - \pi \vert > \pi/n$ or $\vert \theta - \pi \vert \ge \pi/n$, respectively. 

Reversing the steps in the preceding argument requires the converse of \thref{cto}, which is noticeably absent in Kellogg's proof of \thref{kell}.     

\begin{theorem}
    \thlabel{convcto}
        Let $\mu = r(\cos\alpha + i\sin\alpha) \in \mathbb{C}$, with $\alpha \in (-\pi,\pi]$. If $\vert \alpha \vert \ge \pi/n$, then there is a polynomial $q$ of degree $n$ with nonnegative coefficients such that $q(0) \ne 0$ and $q(\mu) = 0$. Furthermore, if $n > 1$ and $\frac{\pi}{\alpha} \notin \mathbb{Z}$, then there is a polynomial $q$ of degree $n$ with positive coefficients such that $q(\mu) = 0$.
\end{theorem}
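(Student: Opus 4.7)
The plan is to exhibit an explicit polynomial $q$ of the required form. By replacing $\mu$ with $\bar\mu$ if necessary, one may assume $\alpha \in (0, \pi]$, since a real polynomial vanishes at $\mu$ if and only if it vanishes at $\bar\mu$. The case $n = 1$ is immediate: the hypothesis $|\alpha| \ge \pi$ forces $\alpha = \pi$, so $\mu = -r$ and $q(t) = t + r$ suffices. Hence assume $n \ge 2$ for what follows.

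Since $\mu$ and $\bar\mu$ are the roots of the real quadratic $t^2 - 2r\cos\alpha \cdot t + r^2$, I would set
\[
  q(t) = (t^2 - 2r\cos\alpha \cdot t + r^2)\, p(t), \qquad p(t) = \sum_{j=0}^{n-2} r^{-j}\sin\!\left(\tfrac{(j+1)\pi}{n}\right) t^j,
\]
which forces $q(\mu) = 0$ automatically. This Chebyshev/Dirichlet-kernel-like ansatz for $p$ is motivated by the identity
\[
  \sin\!\left(\tfrac{(k-1)\pi}{n}\right) + \sin\!\left(\tfrac{(k+1)\pi}{n}\right) = 2\cos\!\left(\tfrac{\pi}{n}\right)\sin\!\left(\tfrac{k\pi}{n}\right),
\]
which will collapse the middle coefficients of $q$ into a single clean expression.

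Direct expansion combined with the identity above gives $[t^0]q = r^{2}\sin(\pi/n)$, $[t^n]q = r^{2-n}\sin(\pi/n)$, and
\[
  [t^k]q = 2 r^{2-k} \sin\!\left(\tfrac{k\pi}{n}\right) \bigl[\cos(\pi/n) - \cos\alpha\bigr] \quad \text{for } 1 \le k \le n-1.
\]
Because $\cos$ is strictly decreasing on $[0, \pi]$, the bracketed quantity is strictly positive when $\alpha > \pi/n$ and vanishes when $\alpha = \pi/n$, while $\sin(k\pi/n) > 0$ throughout. Consequently, if $|\alpha| > \pi/n$ then every coefficient of $q$ is strictly positive, and if $|\alpha| = \pi/n$ then the middle coefficients vanish and $q$ is a positive scalar multiple of $t^n + r^n$, which has nonnegative coefficients and nonzero constant term. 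Since $\pi/\alpha \notin \mathbb{Z}$ together with $|\alpha| \ge \pi/n$ excludes $|\alpha| = \pi/n$, both assertions of the theorem follow at once from this single construction.

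The main obstacle I anticipate is not the verification---once the ansatz is in hand it reduces to a single sum-to-product manipulation---but rather the discovery of the right coefficient sequence for $p$. Insisting that the middle coefficients of $q$ be controlled by the single quantity $\cos(\pi/n) - \cos\alpha$, so that the positivity threshold matches the Cowling--Obrechkoff--Thron bound exactly, forces the sequence $B_j := r^j b_j$ to satisfy the homogeneous recurrence $B_{k-2} + B_k = 2\cos(\pi/n)\, B_{k-1}$ subject to the boundary conditions $B_{-1} = B_{n-1} = 0$, whose essentially unique solution is $B_j = \sin((j+1)\pi/n)$.
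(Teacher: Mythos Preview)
Your proof is correct and genuinely different from the paper's. The paper locates the integer $k=\lceil \pi/\alpha\rceil$ with $\alpha\in[\pi/k,\pi/(k-1))$, builds the trinomials
\[
Q_j(t)=t^k-\frac{\sin k\alpha}{\sin j\alpha}\,r^{k-j}t^j+\frac{\sin(k-j)\alpha}{\sin j\alpha}\,r^k\qquad(1\le j\le k-1),
\]
shows each has nonnegative coefficients and $Q_j(\mu)=0$, averages them to force strict positivity when $\alpha\in(\pi/k,\pi/(k-1))$, and finally multiplies by $t^{n-k}+1$ or $\sum_{j=0}^{n-k}t^j$ to reach degree $n$. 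Your construction instead writes down a single degree-$n$ polynomial directly, with the $\alpha$-independent cofactor $p(t)=\sum_{j=0}^{n-2} r^{-j}\sin\!\big((j+1)\pi/n\big)t^{j}$ chosen so that the recurrence collapses each interior coefficient to $2r^{2-k}\sin(k\pi/n)\,[\cos(\pi/n)-\cos\alpha]$. This is cleaner: no case split on $k$, no padding step, and it actually proves a slightly stronger statement---positive coefficients whenever $|\alpha|>\pi/n$, not merely when $\pi/\alpha\notin\mathbb{Z}$. The paper's trinomial route, on the other hand, requires less discovery and makes the connection to the extremal case $q(t)=a_nt^n+a_0$ in the Cowling--Obrechkoff--Thron statement more transparent at each stage.
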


The purpose of this work is to establish \thref{convcto} which, to the best of our knowledge, is novel and has not appeared in the literature. 

\section{Background \& Motivation} \label{sec:background}

Suppose that $A$ is an $n$-by-$n$ matrix with complex entries and eigenvalues $\lambda_1,\ldots,\lambda_n$ (repetitions included). Recall that if $E_k(A)$ denotes the sum of the $\binom{n}{k}$ \emph{principal minors of size $k$} \cite[p.~17]{hj2013} and $p_A$ denotes the characteristic polynomial of $A$, then 
\[ p_A(t) \coloneqq \det(tI-A) = \prod_{k=1}^n (t - \lambda_k) = \left( \sum_{k=0}^{n-1} (-1)^{n-k} E_{n-k}(A) t^k \right) + t^n \]
(see, e.g., Horn and Johnson \cite[p.~53]{hj2013}). 

\begin{lemma}
    \thlabel{qauxpoly}
        Let $A$ be an $n$-by-$n$ matrix with complex entries and eigenvalues $\lambda_1,\ldots,\lambda_n$ (repetitions included). If 
        \begin{equation}
            \label{qpoly}
                q_A (t) \coloneqq (-1)^n p_A(-t),
        \end{equation}
        then 
            \begin{equation*}
                q_A (t) = \prod_{k=1}^n (t + \lambda_k) = \left( \sum_{k=0}^{n-1} E_{n-k}(A) t^k \right) + t^n.
            \end{equation*} 
        Furthermore, $p_A(\lambda) = 0$ if and only if $q_A(-\lambda)=0$.
\end{lemma}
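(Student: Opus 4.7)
The plan is to deduce both formulas for $q_A(t)$ and the equivalence of zeros by a single substitution $t \mapsto -t$ in the characteristic polynomial identity recalled immediately before the statement, followed by multiplication by $(-1)^n$ as prescribed in \eqref{qpoly}. Since only polynomial identities in one variable are involved, the entire argument is routine sign bookkeeping.

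First, I would handle the product form: distributing one factor of $-1$ from $(-1)^n$ into each of the $n$ linear factors of $\prod_{k=1}^n(-t-\lambda_k)$ yields $\prod_{k=1}^n(t+\lambda_k)$. Second, for the coefficient form, substituting $-t$ into $\sum_{k=0}^{n-1}(-1)^{n-k}E_{n-k}(A)t^k + t^n$ introduces an extra $(-1)^k$ on the $k$th term and $(-1)^n$ on the leading term; combined with the outer factor of $(-1)^n$ these collapse to $(-1)^{2n} = 1$, giving exactly the claimed sum.

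Finally, the equivalence $p_A(\lambda)=0 \iff q_A(-\lambda)=0$ is immediate from \eqref{qpoly}: setting $t = -\lambda$ gives $q_A(-\lambda) = (-1)^n p_A(\lambda)$, so the two quantities vanish simultaneously. I do not anticipate any real obstacle; the only delicate point is the parity bookkeeping, whose net effect is simply that every sign cancels.
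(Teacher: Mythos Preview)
Your proposal is correct and follows essentially the same approach as the paper: both obtain the product form by distributing one factor of $-1$ into each linear factor of $\prod_{k=1}^n(-t-\lambda_k)$, both obtain the coefficient form by tracking the sign $(-1)^{n-k}(-1)^k(-1)^n = 1$ termwise, and both read the zero equivalence directly from the defining relation \eqref{qpoly}.
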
 

\begin{proof}
    Since $p_A(t) = \prod_{k=1}^n (t - \lambda_k)$, it follows that 
        \[ q_A(t) = (-1)^n \prod_{k=1}^n (-t - \lambda_k) = (-1)^n \prod_{k=1}^n (-1)^n(t + \lambda_k) = \prod_{k=1}^n (t + \lambda_k), \]
    and since 
    \[ p_A(t) = \left( \sum_{k=0}^{n-1} (-1)^{n-k} E_{n-k}(A) t^k \right) + t^n, \] 
    it follows that
        \begin{align*}
            q_A(t) 
            &= (-1)^n \left( \sum_{k=0}^{n-1} (-1)^{n-k} E_{n-k}(A) (-t)^k \right) + (-1)^n (-t)^n  \\  
            &= (-1)^n \left( \sum_{k=0}^{n-1} (-1)^{n}E_{n-k}(A) t^k \right) + t^n  \\
            &= \left( \sum_{k=0}^{n-1} E_{n-k}(A) t^k \right) + t^n.  
        \end{align*}
    Finally, because $q_A (t) = (-1)^n p_A(-t)$, it is clear that $p_A(\lambda) = 0$ if and only if $q_A(-\lambda)=0$.
\end{proof}

The following result is immediate.

\begin{theorem}
    \thlabel{thm:pmatrix}
    If $A$ is a $P$ matrix (respectively, $P_0$ matrix), then the polynomial $q_A$ has positive (respectively, nonnegative) coefficients.
\end{theorem}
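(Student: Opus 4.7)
The plan is to read the coefficients directly off the formula provided by \thref{qauxpoly} and apply the definition of $P$ matrix (respectively, $P_0$ matrix) term-by-term. Specifically, \thref{qauxpoly} gives
\[
q_A(t) = t^n + \sum_{k=0}^{n-1} E_{n-k}(A)\, t^k,
\]
so the leading coefficient is $1$ and, for each $k \in \{0,\ldots,n-1\}$, the coefficient of $t^k$ is the quantity $E_{n-k}(A)$, which is by definition the sum of the $\binom{n}{n-k}$ principal minors of $A$ of size $n-k$.

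Next I would invoke the hypothesis. If $A$ is a $P$ matrix, every principal minor of $A$ is positive, so each summand defining $E_{n-k}(A)$ is positive, and hence $E_{n-k}(A) > 0$ for every $k \in \{0,\ldots,n-1\}$; together with the leading coefficient $1 > 0$, this gives that every coefficient of $q_A$ is positive. If $A$ is a $P_0$ matrix, the same argument with ``positive'' replaced by ``nonnegative'' yields $E_{n-k}(A) \ge 0$ for every $k$, and the leading coefficient is still $1 > 0$, so every coefficient of $q_A$ is nonnegative.

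There is no real obstacle here: the lemma already repackages $q_A$ into a form whose coefficients are precisely the elementary symmetric-like quantities $E_j(A)$, and the hypothesis on $A$ is exactly the sign statement about those quantities. The only thing worth flagging explicitly in the write-up is that the coefficient of $t^n$ is $1$, which is positive in both cases, so one does not need to appeal to any principal minor for the leading term.
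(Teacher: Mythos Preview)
Your proposal is correct and matches the paper's approach exactly: the paper simply declares the result ``immediate'' from \thref{qauxpoly}, and what you have written is precisely the routine verification the paper leaves implicit.
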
  

Kellogg \cite[Theorem 4]{k1972} gave a necessary and sufficient condition on a multiset of complex numbers to be the spectrum of a $P$ matrix or $P_0$ matrix. 

\begin{theorem}
    \thlabel{kelltheorem4}
        If $\Lambda = \{ \lambda_1,\ldots,\lambda_n \}$ is a multiset of complex numbers, then $\Lambda$ is the spectrum of a $P$ matrix (respectively, $P_0$ matrix) if and only if the monic polynomial
            \begin{equation}
                \label{qpoly2}
                q(t) \coloneqq \prod_{k=1}^n (t+\lambda_k) = \left(\sum_{k=0}^{n-1} c_k t^k \right) + t^n    
            \end{equation}
        has positive (respectively, nonnegative) coefficients.
\end{theorem}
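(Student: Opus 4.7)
My plan is to prove the two implications separately: necessity follows immediately from the earlier results of this section, whereas sufficiency requires an explicit matrix construction.

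\emph{Necessity.} Suppose $\Lambda$ is the spectrum of a $P$ matrix (respectively, $P_0$ matrix) $A$. Then \thref{qauxpoly} yields $q_A(t) = \prod_{k=1}^n (t + \lambda_k) = q(t)$, so the auxiliary polynomial of \eqref{qpoly} coincides with the $q$ of \eqref{qpoly2}. Applying \thref{thm:pmatrix} gives that $q_A$ has positive (respectively, nonnegative) coefficients, as required.

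\emph{Sufficiency, $P_0$ case.} Given that $q(t) = t^n + \sum_{k=0}^{n-1} c_k t^k$ has nonnegative coefficients, I would exhibit the explicit realizer
\[ A := \begin{pmatrix}
c_{n-1} & c_{n-2} & \cdots & c_1 & c_0 \\
-1 & 0 & \cdots & 0 & 0 \\
0 & -1 & \cdots & 0 & 0 \\
\vdots & & \ddots & & \vdots \\
0 & 0 & \cdots & -1 & 0
\end{pmatrix}. \]
An induction on $n$ (expanding $\det(tI - A)$ along the last column) yields $p_A(t) = \sum_{k=0}^{n} (-1)^{n-k} c_k t^k = (-1)^n q(-t)$, after which \thref{qauxpoly} identifies $\Lambda$ as the spectrum of $A$. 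Next, a case analysis on $S \subseteq \{1, \ldots, n\}$ shows every principal minor of $A$ is nonnegative: if $1 \notin S$, then $A[S]$ is strictly lower triangular and $\det A[S] = 0$; if $1 \in S$ but $S \ne \{1, \ldots, |S|\}$, then the row of $A[S]$ indexed by the first element of $S$ that skips its predecessor is identically zero, so again $\det A[S] = 0$; and if $S = \{1, \ldots, k\}$, a further expansion along the last column produces $\det A[S] = c_{n-k} \ge 0$. Hence $A$ is a $P_0$ matrix with the required spectrum.

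\emph{Sufficiency, $P$ case.} When every $c_k > 0$ strictly, the matrix above remains a $P_0$-realizer but fails to be a $P$-realizer, since most of its principal minors vanish. The heart of the proof, and the main technical obstacle I anticipate, is to exhibit a matrix in the isospectral set $\{M \in \mathbb{C}^{n\times n} : M \text{ has spectrum } \Lambda\}$ whose every principal minor is strictly positive. My plan is to perturb $A$ by a similarity $A_\varepsilon := S(\varepsilon) A S(\varepsilon)^{-1}$ with $S(\varepsilon) = I + \varepsilon N$, for $N$ chosen so that the linear part in $\varepsilon$ of each vanishing principal minor of $A_\varepsilon$ is strictly positive; the already-positive minors $c_{n-k}$ remain positive for small $\varepsilon > 0$ by openness. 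Equivalently, one must show that a certain finite family of linear functionals in $N$ (one per vanishing minor) can be made simultaneously positive, i.e., that their joint dual cone has nonempty interior, which I expect to follow from a combinatorial analysis of how a generic rank-one perturbation of the identity deforms the entries of $A$. Should this direct perturbation argument run into sign obstructions, an alternative is to build the $P$-realizer from a strict-inequality analogue of the companion construction above, replacing the vanishing diagonal and the $-1$ subdiagonal by strictly positive entries and compensating elsewhere so as to preserve $p_A$.
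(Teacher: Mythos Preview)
The paper does \emph{not} prove \thref{kelltheorem4}; it is quoted verbatim from Kellogg \cite[Theorem~4]{k1972} as background material, so there is no in-paper argument to compare your proposal against.

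On the merits of your attempt itself: the necessity direction is correct, and your companion-type realizer for the $P_0$ case is sound --- the case split on $S$ is valid, and the leading principal minors do come out to $c_{n-k}$ as you claim. The $P$ case, however, is not a proof but a research plan. You explicitly defer the crucial step (``I expect to follow from a combinatorial analysis'', ``should this direct perturbation argument run into sign obstructions\ldots'') without verifying that the linear parts of the vanishing minors under a similarity perturbation can in fact be made simultaneously positive. This is the genuine gap: the $P$ sufficiency is exactly the nontrivial half of Kellogg's theorem, and your proposal leaves it unproved. If you want to complete it, you would either need to carry out the perturbation analysis in full or consult Kellogg's original construction in \cite{k1972}.
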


Kellogg \cite[p.~174]{k1972} asserts that the converse of \thref{kell} follows from \thref{kelltheorem4} alone. However, if $\lambda = r(\cos\theta + i \sin \theta)$, with $\theta \in (0,2\pi]$, and $\theta$ satisfies \eqref{eigineq} or \eqref{eigineq2}, then, in order to apply \thref{kelltheorem4}, a multiset
    \[ \Lambda = \{ \lambda_1 \coloneqq \lambda,\ldots, \lambda_n \} \]
of complex numbers is required such that the polynomial \( q(t) \coloneqq \prod_{k=1}^n (t + \lambda_k) \) has positive or, respectively, nonnegative coefficients. If 
    \[ \mu \coloneqq -\lambda = r(\cos\alpha + i\sin\alpha), \] 
with $\alpha \coloneqq \theta - \pi$, then $q(\mu) = 0$ and $\alpha \in (-\pi,\pi]$. Furthermore, since $\theta$ satisfies \eqref{eigineq} or \eqref{eigineq2}, it follows that $\vert\alpha\vert > \frac{\pi}{n}$ or $\vert\alpha\vert \ge \frac{\pi}{n}$, respectively. Thus, application of \thref{kelltheorem4} requires a polynomial with characteristics as specified in \thref{convcto}.  

\section{Ancillary Results} \label{exp_cons}

In this section, we establish several ancillary results that will be used to prove \thref{convcto}.  

\begin{lemma}
    \thlabel{sinntheta}
        If $1 \le j < k$ and \(\alpha \in \left[\frac{\pi}{k},\frac{\pi}{k-1} \right) \), then \(\sin k \alpha \le 0\), \(\sin j \alpha > 0\), and \( \sin (k-j) \alpha > 0\). 
\end{lemma}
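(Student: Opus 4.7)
The plan is to simply locate each of the three arguments $k\alpha$, $j\alpha$, and $(k-j)\alpha$ on the real line and read off the sign of the sine from its position. The hypothesis $\alpha \in [\pi/k, \pi/(k-1))$ multiplies through immediately to yield intervals for each argument, and the desired sign statements then fall out of elementary properties of the sine function. There is no nontrivial obstacle; the lemma is essentially a bookkeeping exercise, and the only point requiring minor care is that $j < k$ forces $k \ge 2$, so that $\pi/(k-1)$ is defined.

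First, I would multiply the defining inequality by $k$ to get $k\alpha \in [\pi,\ k\pi/(k-1)) = [\pi,\ \pi + \pi/(k-1))$. Since $k \ge 2$, the upper endpoint is at most $2\pi$, so $k\alpha$ lies in $[\pi, 2\pi)$, and therefore $\sin k\alpha \le 0$, with equality precisely at the left endpoint $\alpha = \pi/k$.

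Second, for any integer $m$ with $1 \le m \le k-1$, I would observe that $m\alpha \ge m\pi/k \ge \pi/k > 0$ and $m\alpha < m\pi/(k-1) \le \pi$, so $m\alpha$ lies in the open interval $(0,\pi)$ and hence $\sin m\alpha > 0$. Applying this observation with $m = j$ and then with $m = k-j$—both of which belong to $\{1,\ldots,k-1\}$ because $1 \le j \le k-1$—yields the remaining two strict inequalities.

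The asymmetric role of the upper bound $\pi/(k-1)$ (rather than, say, $\pi/(k-1) + \varepsilon$) is exactly what makes the argument work: it is the strictness of $m\alpha < \pi$ for every $m \le k-1$ that guarantees the strict positivity of $\sin j\alpha$ and $\sin(k-j)\alpha$, while the inclusion of the left endpoint $\pi/k$ is what forces the weak (rather than strict) inequality $\sin k\alpha \le 0$.
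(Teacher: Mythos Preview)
Your proof is correct and follows essentially the same approach as the paper: multiply the defining inequality for $\alpha$ by $k$ to place $k\alpha$ in $[\pi,2\pi)$, and by any $m\in\{1,\ldots,k-1\}$ to place $m\alpha$ in $(0,\pi)$, then read off the signs. The only cosmetic difference is that you handle $j$ and $k-j$ simultaneously via a single parameter $m$, whereas the paper treats $j$ first and then appeals to that case for $k-j$.
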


\begin{proof}
First, note that \(\frac{k \pi}{k-1}\leq 2\pi \Longleftrightarrow k \ge 2\). 

Since \( k \alpha \in \left[\pi, \frac{k \pi}{k-1} \right) \) and \( k \ge 2 \), it follows that $k \alpha \in [\pi, 2\pi)$. Thus, $\sin k\alpha \leq 0$. 

Similarly, $j\alpha \in \left[\frac{j\pi}{k},\frac{j\pi}{k-1} \right)$ and since $0 < j < k$, it follows that $0 < \frac{j\pi}{k}$ and $\frac{j\pi}{k-1} \le \pi$. Hence, $j\alpha \in \left(0,\pi\right)$ and $\sin j\alpha > 0$.

Lastly, $\sin(k-j) \alpha > 0$ because $1 \le k-j \le k$ and applying the preceding case.
\end{proof}

In order to motivate the next result, notice that if $\mu = r(\cos\alpha + i\sin \alpha)$, with $\alpha \in [\frac{\pi}{2},\pi)$, then $\sin\alpha > 0$ and $\cos\alpha \le 0$. If $Q_1(t) \coloneqq (t - \mu)(t - \bar{\mu})$, then   
\begin{align*}
    Q_1 (t) 
    = (t - \mu)(t - \bar{\mu})         
    = t^2 - 2 \Re\mu t + \mu\bar{\mu}   
    = t^2 - 2r \cos\alpha t + r^2,                   
\end{align*}
$Q_1(0) \ne 0$, $Q_1(\mu)=0$, and $Q_1$ has nonnegative coefficients. Furthermore, 
\[ Q_1(t) = t^2 - \frac{\sin2\alpha}{\sin\alpha} r t + r^2 \frac{\sin\alpha}{\sin\alpha}. \] 
The observations above generalize as follows.

\begin{theorem}
    \thlabel{qjpoly}
        Suppose that $1 \le j < k$ and $\mu = r(\cos \alpha + i \sin \alpha) \in \mathbb{C}$, with $\alpha \in \left[\frac{\pi}{k},\frac{\pi}{k-1} \right)$. If 
            \begin{align}
                \label{qmonic}
                Q_j(t) \coloneqq t^k - \frac{\sin k \alpha}{\sin j\alpha} r^{k-j} t^j + \frac{\sin (k-j) \alpha}{\sin j\alpha} r^k,     
            \end{align}
        then $Q_j$ has nonnegative coefficients, $Q_j (0) \ne 0$, and $Q_j(\mu) = 0$.   
\end{theorem}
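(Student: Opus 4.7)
The plan is to verify the three claimed properties by direct computation, invoking \thref{sinntheta} for the sign conditions and a sine-difference identity for the root condition. I would implicitly assume $r > 0$ (otherwise $\mu = 0$ and the constant term would vanish, violating the $Q_j(0)\ne 0$ conclusion, so $r>0$ is forced).

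First, I would check the coefficients in the order they appear in \eqref{qmonic}. By \thref{sinntheta}, $\sin j\alpha > 0$, $\sin(k-j)\alpha > 0$, and $\sin k\alpha \le 0$. Hence the coefficient of $t^j$, namely $-\sin(k\alpha)/\sin(j\alpha)\cdot r^{k-j}$, is nonnegative; and the constant term $\sin((k-j)\alpha)/\sin(j\alpha)\cdot r^k$ is strictly positive. This simultaneously establishes that $Q_j$ has nonnegative coefficients and that $Q_j(0)\ne 0$.

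Next, to check $Q_j(\mu) = 0$, I would write $\mu = re^{i\alpha}$, so $\mu^k = r^k e^{ik\alpha}$ and $\mu^j = r^j e^{ij\alpha}$. Substituting into \eqref{qmonic} and pulling out the common factor $r^k/\sin(j\alpha)$, the vanishing of $Q_j(\mu)$ reduces to the identity
\[
\sin(j\alpha)\, e^{ik\alpha} - \sin(k\alpha)\, e^{ij\alpha} + \sin((k-j)\alpha) = 0.
\]
Expanding $e^{ik\alpha}$ and $e^{ij\alpha}$ by Euler's formula, the imaginary part is $\sin(j\alpha)\sin(k\alpha) - \sin(k\alpha)\sin(j\alpha) = 0$ automatically, and the real part is
\[
\sin(j\alpha)\cos(k\alpha) - \sin(k\alpha)\cos(j\alpha) + \sin((k-j)\alpha) = -\sin((k-j)\alpha) + \sin((k-j)\alpha) = 0
\]
by the sine-difference formula $\sin(A-B) = \sin A\cos B - \cos A\sin B$ applied with $A = j\alpha$, $B = k\alpha$.

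There is no real obstacle here; the proof is essentially a one-line trigonometric identity dressed up with the sign estimates from \thref{sinntheta}. The only point that requires any care is noting that $\sin(j\alpha) \ne 0$ so the division defining the coefficients is legitimate, which is immediate from \thref{sinntheta}.
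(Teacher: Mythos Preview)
Your proof is correct and follows essentially the same approach as the paper: invoke \thref{sinntheta} for the sign conditions to handle the coefficients and $Q_j(0)\ne 0$, then clear the factor $r^k/\sin(j\alpha)$ and verify the resulting trigonometric identity via Euler's formula and the sine subtraction law. Your extra remarks that $r>0$ and $\sin(j\alpha)\ne 0$ are needed are apt and make explicit hypotheses the paper leaves implicit.
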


\begin{proof}
    By \thref{sinntheta}, \(\sin k \alpha \le 0\), \(\sin j \alpha > 0\), and \( \sin (k-j) \alpha > 0\). Thus, $Q_j$ has nonnegative coefficients and $Q_j (0) \ne 0$.
    
    For ease of notation, let $s_n \coloneqq \sin n \alpha$ and $c_n \coloneqq \sin n \alpha$. With these conventions in mind, note that \( s_m c_n - c_m s_n = s_{m-n}\) and \( s_n + s_{-n} = 0\). Thus, 
    \begin{align*}
        \frac{s_j}{r^k} Q_k(\mu) 
        &= s_j(c_k + i s_k) - s_k(c_j + is_j) + s_{k-j} \\
        &= s_jc_k + is_js_k - c_j s_k - is_js_k + s_{k-j}  \\
        &= s_jc_k - c_j s_k + s_{k-j}                  \\
        &= s_{j - k} + s_{k-j} = 0,
    \end{align*}
i.e., $Q_j(\mu) = 0$.
\end{proof}

\begin{corollary}
    \thlabel{pospoly}
        Suppose that $k \ge 2$ and $\mu = r(\cos \alpha + i \sin \alpha) \in \mathbb{C}$, with $\alpha \in \left(\frac{\pi}{k},\frac{\pi}{k-1} \right)$. If 
        \begin{equation}
            \label{bigQ}
                Q(t) := \frac{1}{k-1}\sum_{j=1}^{k-1} Q_j(t)
        \end{equation}
        then $Q$ is a monic polynomial of degree $k$ with positive coefficients and $Q(\mu) = 0$.
\end{corollary}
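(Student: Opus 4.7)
The plan is to expand the sum $\sum_{j=1}^{k-1} Q_j(t)$ explicitly using the trinomial form from \thref{qjpoly}, and read off the coefficients of $Q$ one power at a time. The crucial observation is that for each $j \in \{1,\dots,k-1\}$ the polynomial $Q_j$ is \emph{sparse}: only the terms in degrees $k$, $j$, and $0$ are nonzero. Consequently, in the sum $\sum_{j=1}^{k-1} Q_j(t)$ the coefficient of $t^j$ (for $1 \le j \le k-1$) comes entirely from $Q_j$ itself, with no cross-contamination from the other summands.

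First, I would note that $Q(\mu) = 0$ is immediate: by \thref{qjpoly}, $Q_j(\mu) = 0$ for every $j \in \{1,\dots,k-1\}$, so
\[
  Q(\mu) \;=\; \frac{1}{k-1}\sum_{j=1}^{k-1} Q_j(\mu) \;=\; 0.
\]
Next, collecting terms in \eqref{bigQ} gives
\[
  Q(t) \;=\; t^k \;-\; \frac{1}{k-1}\sum_{j=1}^{k-1} \frac{\sin k\alpha}{\sin j\alpha}\, r^{k-j}\, t^j \;+\; \frac{r^k}{k-1}\sum_{j=1}^{k-1}\frac{\sin (k-j)\alpha}{\sin j\alpha},
\]
so $Q$ is monic of degree $k$. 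The constant term is positive because, by \thref{sinntheta}, both $\sin j\alpha$ and $\sin(k-j)\alpha$ are strictly positive for each $j \in \{1,\dots,k-1\}$.

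The only real content is verifying strict positivity of the intermediate coefficients. For each $1 \le j \le k-1$, the coefficient of $t^j$ in $Q$ equals
\[
  -\frac{1}{k-1}\cdot\frac{\sin k\alpha}{\sin j\alpha}\, r^{k-j},
\]
so I need $\sin k\alpha$ to be \emph{strictly} negative (\thref{sinntheta} only guarantees $\sin k\alpha \le 0$ on the half-open interval $[\pi/k,\pi/(k-1))$). This is where the hypothesis $\alpha \in (\pi/k,\pi/(k-1))$ with strict left endpoint comes in: from $\alpha > \pi/k$ one gets $k\alpha > \pi$, and combined with $k\alpha < k\pi/(k-1) \le 2\pi$ (valid for $k\ge 2$) this places $k\alpha$ in the open interval $(\pi,2\pi)$, on which $\sin$ is strictly negative. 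Combined with $\sin j\alpha > 0$ from \thref{sinntheta}, this makes every coefficient of $t^j$ in $Q$ strictly positive, completing the proof.

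The argument is mostly bookkeeping; the one point requiring care is the upgrade of the nonstrict inequality $\sin k\alpha \le 0$ to a strict one using the strict endpoint in the hypothesis, which is what separates this corollary (positive coefficients) from its parent \thref{qjpoly} (merely nonnegative coefficients).
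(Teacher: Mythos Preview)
Your proof is correct and follows essentially the same route as the paper's own argument: expand the sum, read off the leading and constant coefficients, invoke \thref{sinntheta} for the positivity of $\sin j\alpha$ and $\sin(k-j)\alpha$, and then use the open left endpoint $\alpha > \pi/k$ together with $k \ge 2$ to place $k\alpha$ strictly inside $(\pi,2\pi)$, upgrading $\sin k\alpha \le 0$ to $\sin k\alpha < 0$. Your remark about the sparsity of each $Q_j$ making the coefficient extraction clean is a helpful clarification, but the underlying mathematics is identical.
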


\begin{proof}
    By \thref{qjpoly},  
    \[ Q(\mu) = \frac{1}{k-1}\sum_{j=1}^{k-1} Q_j(\mu) = 0. \]
    Since 
    \begin{align*}
            Q(t) 
            = \frac{1}{k-1} \left( \sum_{j=1}^{k-1} \frac{\sin (k-j) \alpha}{\sin j\alpha} r^k - \sum_{j=1}^{k-1} \frac{\sin k \alpha}{\sin j\alpha} r^{k-j} t^j \right) + t^k. 
    \end{align*}
    it follows that $Q$ is monic and of degree $k$. 
    
    If $1 \le j \le k-1$, then, by \thref{sinntheta}, \(\sin k \alpha \le 0\), \(\sin j \alpha > 0\), and \( \sin (k-j) \alpha > 0\). Moreover, since \( k \alpha \in \left(\pi, \frac{k \pi}{k-1} \right) \) and \( k \ge 2 \), it follows that $k \alpha \in (\pi, 2\pi)$. Thus, $\sin k\alpha < 0$. Consequently, 
        \[ -\frac{\sin k \alpha}{\sin j\alpha} r^{k-j} > 0 \]
    and
        \[ \frac{\sin (k-j) \alpha}{\sin j\alpha} r^k > 0, \]
    i.e., the polynomial $Q$ has positive coefficients. 
\end{proof}

\section{Proof of the Main Result}

We are now ready to prove \thref{convcto}.

\begin{proof}
    [Proof of \thref{convcto}]
If $n=1$ and $\vert \alpha \vert \ge \pi$, then $\alpha = \pi$, $\mu < 0$, and the desired polynomial is $t - \mu$. 

Otherwise, assume that $n > 1$ and $\vert \alpha \vert \ge \pi/n$. It suffices to show that $\mu$ is a zero of a polynomial possessing the desired properties of degree $k$, with $1 \le k \le n$ (if $1 \le k < n$, then we can multiply the polynomial by $t^{n-k} + 1$ to obtain a polynomial of degree $n$ with the desired properties). If $\alpha = \pi$, then $t - \mu$ is the desired polynomial. Otherwise, it may be assumed, without loss of generality, that $\pi/n \le \alpha < \pi$, since zeros of polynomials with real coefficients appear in conjugate pairs. If $k \coloneqq \lceil \frac{\pi}{\alpha} \rceil$, then $2 \le k \le n$ and $\alpha \in \left[ \frac{\pi}{k}, \frac{\pi}{k-1} \right)$. By \thref{qjpoly}, the polynomial $Q_j$ defined in \eqref{qmonic} has the desired properties.

Further suppose that $n > 1$ and $\frac{\pi}{\alpha} \notin \mathbb{Z}$. It suffices to show that $\mu$ is a zero of a polynomial possessing the desired properties of degree $k$, with $1 \le k \le n$ (if $1 \le k < n$, then we can multiply the polynomial by $\sum_{j = 0}^{n-k} t^j$ to obtain a polynomial of degree $n$ with the desired properties). It may be assumed, without loss of generality, that $\pi/n < \alpha < \pi$ and $\frac{\pi}{\alpha} \notin \mathbb{N}$, since zeros of
polynomials with real coefficients appear in conjugate pairs. If $k \coloneqq \lceil \frac{\pi}{\alpha} \rceil$, then $2 \le k \le n$ and $\alpha \in \left( \frac{\pi}{k}, \frac{\pi}{k-1} \right)$. By \thref{pospoly}, the polynomial $Q$ defined in \eqref{bigQ} has the desired properties.
\end{proof}



\bibliographystyle{abbrv}
\bibliography{refs}

\end{document}